%
%

\documentclass[10pt]{amsart}
\usepackage{amssymb,MnSymbol}
\usepackage{amsthm,amsmath}
\usepackage{cite}

\setlength{\unitlength}{7ex}        

\title{Acz\'elian $n$-ary semigroups}\thanks{Communicated by Mikhail Volkov.}

\author{Miguel Couceiro}
\address{Mathematics Research Unit, FSTC, University of Luxembourg \\
6, rue Coudenhove-Kalergi, L-1359 Luxembourg, Luxembourg}
\email{miguel.couceiro[at]uni.lu }

\author{Jean-Luc Marichal}
\address{Mathematics Research Unit, FSTC, University of Luxembourg \\
6, rue Coudenhove-Kalergi, L-1359 Luxembourg, Luxembourg}
\email{jean-luc.marichal[at]uni.lu }

\date{October 18, 2011}

\begin{document}

\theoremstyle{plain}
\newtheorem{theorem}{Theorem}[section]
\newtheorem{lemma}[theorem]{Lemma}
\newtheorem{proposition}[theorem]{Proposition}
\newtheorem{corollary}[theorem]{Corollary}
\newtheorem{fact}[theorem]{Fact}
\newtheorem*{main}{Main Theorem}

\theoremstyle{definition}
\newtheorem{definition}[theorem]{Definition}
\newtheorem{example}[theorem]{Example}

\theoremstyle{remark}
\newtheorem*{conjecture}{Conjecture}
\newtheorem{remark}{Remark}
\newtheorem*{note}{Note}
\newtheorem{claim}{Claim}

\newcommand{\N}{\mathbb{N}}                     
\newcommand{\R}{\mathbb{R}}                     
\newcommand{\Q}{\mathbb{Q}}
\newcommand{\B}{\mathbb{B}}                     
\newcommand{\C}{\mathbb{C}}
\newcommand{\I}{\mathbb{I}}
\newcommand{\J}{\mathbb{J}}
\newcommand{\bfs}{\mathbf{s}}
\newcommand{\bfx}{\mathbf{x}}
\newcommand{\bfy}{\mathbf{y}}
\newcommand{\bfz}{\mathbf{z}}
\newcommand{\med}{\mathrm{med}}

\begin{abstract}
We show that the real continuous, symmetric, and cancellative $n$-ary semigroups are topologically order-isomorphic to additive real $n$-ary
semigroups. The binary case ($n=2$) was originally proved by Acz\'el \cite{Acz49}; there symmetry was redundant.
\end{abstract}

\keywords{$n$-ary semigroup, Acz\'elian semigroup, cancellativity, continuity.}

\subjclass[2010]{Primary 20N15, 39B22; Secondary 20M14}

\maketitle

\section{Introduction}

Let $I$ be a nontrivial real interval (i.e., nonempty and not a singleton) and let $n\geqslant 2$ be an integer. Recall that an $n$-ary function
$f\colon I^n\to I$ is said to be \emph{associative} if it solves the following system of $n-1$ functional equations:
\begin{eqnarray*}
\lefteqn{f\big(x_1,\ldots,f(x_i,\ldots,x_{i+n-1}),x_{i+n},\ldots,x_{2n-1}\big)}\\
&=& f\big(x_1,\ldots,x_i,f(x_{i+1},\ldots,x_{i+n}),\ldots,x_{2n-1}\big),\quad i=1,\ldots,n-1.
\end{eqnarray*}
The pair $(I,f)$ is then called an $n$-ary semigroup (see D\"ornte~\cite{Dor28} and Post~\cite{Pos40}).

A function $f\colon I^n\to I$ is said to be \emph{cancellative} if it is one-to-one in each variable; that is, for every $k\in
[n]=\{1,\ldots,n\}$ and every $\bfx=(x_1,\ldots,x_n)\in I^n$ and $\bfx'=(x'_1,\ldots,x'_n)\in I^n$,
$$
\mbox{$\big( x_i=x'_i~~$ for all $i\in [n]\setminus\{k\}~~$ and $~~f(\bfx)=f(\bfx')\big)\quad\Rightarrow\quad x_k=x'_k\,$.}
$$
Also, a function $f\colon I^n\to I$ is said to be \emph{symmetric} if, for every permutation $\sigma$ on $[n]$, we have $f(x_1,\ldots,x_n)=f(x_{\sigma(1)},\ldots,x_{\sigma(n)})$.

In this paper we present a complete description of those associative functions $f\colon I^n\to I$ which are continuous, symmetric, and
cancellative. Our main result can be stated as follows.

\begin{main}
A function $f\colon I^n\to I$ is continuous, symmetric, cancellative, and associative if and only if there exists a continuous and strictly
monotonic function $\varphi\colon I\to J$ such that
\begin{equation}\label{eq:s7af5}
f(\bfx)=\varphi^{-1}\left(\sum_{i=1}^n\varphi(x_i)\right),
\end{equation}
where $J$ is a real interval of one of the forms $]{-\infty},b[$, $]{-\infty},b]$, $]a,\infty[$, $[a,\infty[$ or $\R ={]{-\infty},\infty[}$
$(b\leqslant 0\leqslant a)$. For such a function $f$, $I$ is necessarily open at least on one end. Moreover, $\varphi$ can be chosen to be strictly increasing. In other
words, the $n$-ary semigroup $(I,f)$ is topologically order-isomorphic to the $n$-ary semigroup $(J,+)$.
\end{main}

The binary case ($n=2$) of the Main Theorem, for which symmetry is not needed, was first stated and proved by J.\ Acz\'el~\cite{Acz49}. A
shorter, more technical proof of Acz\'el's result was then provided by Craigen and P\'ales~\cite{CraPal89} (see also \cite{Acz04} for a recent
survey). The corresponding binary semigroups are called \emph{Acz\'elian} (see Ling~\cite[Section 3.2]{Lin65}).

We say that an $n$-ary semigroup is \emph{Acz\'elian} if it satisfies the assumptions of the Main Theorem. Thus the Main Theorem provides an
explicit description of the class of Acz\'elian $n$-ary semigroups. Although this result is not a trivial derivation of the binary case, we
prove it by following more or less the same steps as in \cite{CraPal89}.

The following example shows that the symmetry assumption is necessary for every odd integer $n\geqslant 3$.

\begin{example}\label{ex:asd6}
Let $n\geqslant 3$ be an odd integer. The function $f\colon \R^n\to \R$, defined by
$$
f(\bfx)=\sum_{i=1}^n (-1)^{i-1}x_i\, ,
$$
is continuous, cancellative, and associative. However, it cannot be of the form (\ref{eq:s7af5}) with a continuous and strictly monotonic
function $\varphi$. Indeed, if the latter would be the case, then by identifying the variables, we would have $f(x^n)=x$ and hence
$\varphi(x)=\varphi(f(x^n))=n\,\varphi(x)$, a contradiction.
\end{example}

This paper is organized as follows. In Section 2 we show how $n$-ary associative functions can be extended to associative functions of certain
higher arities. In Section 3 we provide the proof of the Main Theorem.

To avoid cumbersome notation, we henceforth regard tuples $\bfx$ in $I^n$ as $n$-strings over $I$ and we write $|\bfx|=n$. The $0$-string or
\emph{empty} string is denoted by $\varepsilon$ so that $I^0=\{\varepsilon\}$. We denote by $I^*$ the set of all strings over $I$, that is,
$I^*=\bigcup_{n\in \N}I^n$, where $\N=\{0,1,2,\ldots\}$. Moreover, we consider $I^*$ endowed with concatenation for which we adopt the
juxtaposition notation. For instance, if $\bfx\in I^n$, $y\in I$, and $\bfz\in I^m$, then $\bfx y\bfz\in I^{n+1+m}$.

\begin{remark}
Using this notation, we immediately see that a function $f\colon I^n\to I$ is associative if and only if we have $f(\bfx\,
f(\bfy)\bfz)=f(\bfx'\, f(\bfy')\bfz')$ for every $\bfx\bfy\bfz,\bfx'\bfy'\bfz'\in I^{2n-1}$ such that $\bfy,\bfy'\in I^n$ and $\bfx\bfy\bfz
=\bfx'\bfy'\bfz'$. Similarly, $f$ is cancellative if and only if, for every $\bfx\bfz\in I^{n-1}$ and every $y,y'\in I$, the equality $f(\bfx
y\bfz)=f(\bfx y'\bfz)$ implies $y=y'$.
\end{remark}

For $x \in I$, we also use the short-hand notation $x^m=x\cdots x\in I^m$. Given a function
$g\colon I^*\to I$, we denote by $g_m$ the restriction of $g$ to $I^m$, i.e.\ $g_m:=g|_{I^m}$. We convey that $g_0$ is defined by $g_0(\varepsilon)=\varepsilon$.

\section{Associative extensions}

Recall that a binary function $f\colon I^2\to I$ is said to be \emph{associative} if
$$
f(f(xy)z)=f(xf(yz))\qquad\mbox{for all $x,y,z\in I$}.
$$
Using an infix notation, we can also write this property as
$$
(x\diamond y)\diamond z=x\diamond (y\diamond z)\qquad\mbox{for all $x,y,z\in I$}.
$$
Since associativity expresses that the order in which variables are bracketed is not relevant, it can be easily extended to functions $g\colon
I^*\to I$ by defining
$$
g_m(x_1\cdots x_m)=x_1\diamond\cdots\diamond x_m
$$
for every integer $m\geqslant 2$. The latter definition can be reformulated in prefix notation as $g_2=f$ and
\begin{equation}\label{eq:saf76sf5}
g_m(x_1\cdots x_m)=g_2(g_2(\cdots g_2(g_2(g_2(x_1x_2)x_3)x_4)\cdots) x_m)
\end{equation}
for every $m>2$. Equivalently, we may write $g_2=f$ and $$g_m(x_1\cdots x_m)=g_2(g_{m-1}(x_1\cdots x_{m-1})x_m)$$ for every $m>2$.

Note that the unary function $g_1$ is not involved in this construction and so it could be chosen arbitrarily. However, as we will see in
Proposition~\ref{prop:as8df65}, it is convenient to ask $g_1$ to satisfy the following condition:
\begin{equation}\label{eq:saf765}
\mbox{$g_1\circ g=g$ \quad and\quad $g(\bfx\, g_1(y)\bfz)=g(\bfx y\bfz)$ $~~$ for all $\bfx y\bfz\in I^*$.}
\end{equation}

\begin{definition}
A function $g\colon I^*\to I$ is said to be \emph{associative} if
\begin{enumerate}
\item[$(i)$] $g_2$ is associative,

\item[$(ii)$] condition (\ref{eq:saf76sf5}) holds for every $m>2$ and every $x_1,\ldots,x_m\in I$, and

\item[$(iii)$] condition (\ref{eq:saf765}) holds.
\end{enumerate}
\end{definition}

By definition, an associative function $g\colon I^*\to I$ can always be constructed from a binary associative function $f\colon I^2\to I$ by
defining $g_2=f$, using (\ref{eq:saf76sf5}), and choosing a unary function $g_1$ satisfying (\ref{eq:saf765}) (e.g., the identity
function).\footnote{Note that $g_1$ necessarily solves the idempotency equation $g_1\circ g_1=g_1$.} Such a function $g$, which is completely
determined by $g_1$ and $g_2=f$, will be called an \emph{associative extension} of $f$.

The following proposition provides concise reformulations of associativity of functions $g\colon I^*\to I$ and justifies condition
(\ref{eq:saf765}). We will prove a more general statement in Proposition~\ref{prop:as8df65p}. The equivalence of assertions $(ii)$--$(iv)$ was
proved in \cite{CouMar11}.

\begin{proposition}\label{prop:as8df65}
Let $g\colon I^*\to I$ be a function. The following assertions are equivalent.
\begin{enumerate}
\item[$(i)$] $g$ is associative.

\item[$(ii)$] $g(\bfx\, g(\bfy)\bfz)=g(\bfx' g(\bfy')\bfz')$ for every $\bfx\bfy\bfz,\bfx'\bfy'\bfz'\in I^*$ such that
$\bfx\bfy\bfz=\bfx'\bfy'\bfz'$.

\item[$(iii)$] $g(\bfx\, g(\bfy)\bfz)=g(\bfx\bfy\bfz)$ for every $\bfx\bfy\bfz\in I^*$.

\item[$(iv)$] $g(g(\bfx)g(\bfy))=g(\bfx\bfy)$ for every $\bfx\bfy\in I^*$.
\end{enumerate}
\end{proposition}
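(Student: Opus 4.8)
The plan is to reduce the whole statement to the single equivalence $(i)\Leftrightarrow(iii)$. Since the excerpt records that assertions $(ii)$, $(iii)$ and $(iv)$ are already known to be equivalent (the cited result of \cite{CouMar11}), establishing $(i)\Leftrightarrow(iii)$ closes the chain and proves the Proposition. I would anchor on $(iii)$ because it isolates a single ``flattening'' move --- replacing a contiguous block $\bfy$ by its value $g(\bfy)$ --- against which the structural data packaged in $(i)$ (associativity of $g_2$, the left-bracketing rule (\ref{eq:saf76sf5}), and the unary compatibility (\ref{eq:saf765})) can be matched piece by piece.

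For $(iii)\Rightarrow(i)$ I would recover the three defining conditions by specialization. Taking $\bfx=\varepsilon$, $\bfy=xy$, $\bfz=z$ in $(iii)$ gives $g_2(g_2(xy)z)=g_3(xyz)$, while $\bfx=x$, $\bfy=yz$, $\bfz=\varepsilon$ gives $g_2(x\,g_2(yz))=g_3(xyz)$; comparing the two yields associativity of $g_2$. The same manipulation with $\bfy=x_1\cdots x_{m-1}$ and $\bfz=x_m$ produces the recursion $g_m(x_1\cdots x_m)=g_2(g_{m-1}(x_1\cdots x_{m-1})\,x_m)$, which is exactly the recursive form of (\ref{eq:saf76sf5}), so an easy induction on $m$ recovers (\ref{eq:saf76sf5}) in full. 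Finally, the case $|\bfy|=1$ of $(iii)$ is literally the second half of (\ref{eq:saf765}), and the cases $\bfx=\bfz=\varepsilon$ give $g(g(\bfy))=g(\bfy)$, that is, $g_1\circ g=g$; together these deliver (\ref{eq:saf765}).

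The substantial direction is $(i)\Rightarrow(iii)$, and I expect the main obstacle to live here. Assuming $(i)$, I would first prove the generalized associative law for the left-iterated product: if $g_2$ is associative and $g_m$ is built by (\ref{eq:saf76sf5}), then computing any contiguous subblock first and recombining does not change the overall value, i.e.\ $g(\bfx\,g(\bfy)\,\bfz)=g(\bfx\bfy\bfz)$ whenever $|\bfy|\geqslant 2$ and the outer string has length at least $2$. This is the classical bracketing-independence statement, proved by induction on the total length, the inductive step peeling off the last letter via (\ref{eq:saf76sf5}) and invoking associativity of $g_2$; the bookkeeping of where the block $\bfy$ sits relative to the final letter is the delicate part. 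The remaining cases of $(iii)$ are edge cases absorbed by (\ref{eq:saf765}): when $|\bfy|=1$ it is precisely the insertion identity for $g_1$; when $|\bfy|=0$ both sides collapse to $g(\bfx\bfz)$; and when the outer string $\bfx\,g(\bfy)\,\bfz$ has length $1$ (forcing $\bfx=\bfz=\varepsilon$) the identity $g_1\circ g=g$ gives $g(g(\bfy))=g(\bfy)$. Assembling these cases yields $(iii)$ and completes the equivalence.
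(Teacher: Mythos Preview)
Your proposal is correct and matches the paper's approach in substance: the paper does not give a separate proof of this proposition but defers to the $n$-ary generalization (Proposition~\ref{prop:as8df65p}), where $(iii)$ serves as the hub and $(i)\Rightarrow(iii)$ is established exactly as you describe---rewriting $g(\bfx\,g(\bfy)\,\bfz)$ via the left-bracketing rule and associativity of $g_2$ (resp.\ $g_n$) to normalize all brackets to the left, with the edge cases handled by (\ref{eq:saf765}). Your choice to invoke \cite{CouMar11} for $(ii)\Leftrightarrow(iii)\Leftrightarrow(iv)$ and prove only $(i)\Leftrightarrow(iii)$ is a legitimate shortcut the paper explicitly licenses in the text preceding the statement.
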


For any integer $n\geqslant 2$, define the sets
$$
A_n=\{m\in\N: m\equiv 1~(\mathrm{mod}~n-1)\}\qquad\mbox{and}\qquad I^{(n)}=\bigcup_{m\in A_n}I^m=I\times(I^{n-1})^*.
$$
Just as associativity for binary functions can be extended to functions $g\colon I^*\to I$, one can also extend the associativity of $n$-ary
functions to functions $g\colon I^{(n)}\to I$ as follows.\footnote{This construction is inspired from D\"ornte~\cite{Dor28} and
Post~\cite{Pos40}.} Given an associative function $f\colon I^n\to I$, we define $g\colon I^{(n)}\to I$ as $g_n=f$ and
\begin{equation}\label{eq:saf76sf5p}
g_m(x_1\cdots x_m)=g_n(g_n(\cdots g_n(g_n(x_1\cdots x_n)x_{n+1}\cdots x_{2n-1})\cdots)x_{m-n+2}\cdots x_m)
\end{equation}
for every $m\in A_n$ and $m>n$. Equivalently, we may write $g_n=f$ and $$g_m(x_1\cdots x_m)=g_n(g_{m-n+1}(x_1\cdots x_{m-n+1})x_{m-n}\cdots x_m)$$
for every $m\in A_n$ and $m>n$.

Once again, the unary function $g_1$ can be chosen arbitrarily. However, we ask $g_1$ to satisfy the following condition:
\begin{equation}\label{eq:saf765p}
\mbox{$g_1\circ g=g$ \quad and\quad $g(\bfx\, g_1(y)\bfz)=g(\bfx y\bfz)$ $~~$ for all $\bfx y\bfz\in I^{(n)}$.}
\end{equation}

\begin{definition}\label{de:a7ds4f}
A function $g\colon I^{(n)}\to I$ is said to be \emph{$n$-associative} if
\begin{enumerate}
\item[$(i)$] $g_n$ is associative,

\item[$(ii)$] condition (\ref{eq:saf76sf5p}) holds for every $m\in A_n$, $m>n$, and every $x_1,\ldots,x_m\in I$, and

\item[$(iii)$] condition (\ref{eq:saf765p}) holds.
\end{enumerate}
\end{definition}

By definition, an $n$-associative function $g\colon I^{(n)}\to I$ can always be constructed from an $n$-ary associative function $f\colon I^n\to
I$ by defining $g_n=f$, using (\ref{eq:saf76sf5p}), and choosing a unary function $g_1$ satisfying (\ref{eq:saf765p}) (e.g., the identity
function). Such a function $g$, which is completely determined by $g_1$ and $g_n=f$, will be called an \emph{$n$-associative extension} of $f$.

\begin{example}
From the ternary associative function $f\colon\R^3\to\R$, defined by $f(x_1x_2x_3)=x_1-x_2+x_3$, we can construct the $3$-associative extension
$g\colon \R^{(3)}\to \R$ as
$$
g_m(x_1\cdots x_m)=\sum_{i=1}^m(-1)^{i-1}x_i\qquad (m\geqslant 3,~\mbox{odd}),
$$
for which (\ref{eq:saf765p}) provides the unique solution $g_1=\mathrm{id}$.
\end{example}

The following proposition generalizes Proposition~\ref{prop:as8df65} and provides concise reformulations of $n$-associativity of functions
$g\colon I^{(n)}\to I$ and justifies condition (\ref{eq:saf765p}).

\begin{proposition}\label{prop:as8df65p}
Let $g\colon I^{(n)}\to I$ be a function. The following assertions are equivalent.
\begin{enumerate}
\item[$(i)$] $g$ is $n$-associative.

\item[$(ii)$] $g_1\circ g=g$ and $g(\bfx\, g(\bfy)\bfz)=g(\bfx' g(\bfy')\bfz')$ for every $\bfx\bfy\bfz,\bfx'\bfy'\bfz'\in I^{(n)}$ such
that $\bfy,\bfy'\in I^{(n)}$ and $\bfx\bfy\bfz=\bfx'\bfy'\bfz'$.

\item[$(iii)$] $g(\bfx\, g(\bfy)\bfz)=g(\bfx\bfy\bfz)$ for every $\bfx\bfy\bfz\in I^{(n)}$ such that $\bfy\in I^{(n)}$.

\item[$(iv)$] $g_1\circ g=g$ and $g(g(\bfx_1)\cdots g(\bfx_n))=g(\bfx_1\cdots\bfx_n)$ for every $\bfx_1,\ldots,\bfx_n\in I^{(n)}$.
\end{enumerate}
\end{proposition}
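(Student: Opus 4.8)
The plan is to treat assertion $(iii)$ as the hub and to establish $(i)\Leftrightarrow(iii)$, $(ii)\Leftrightarrow(iii)$, and $(iv)\Leftrightarrow(iii)$. The implication $(iii)\Rightarrow(i)$ is the most transparent: taking $\bfx=\bfz=\varepsilon$ in $(iii)$ yields $g(g(\bfy))=g(\bfy)$, that is $g_1\circ g=g$, while the special case $|\bfy|=1$ gives the padding identity $g(\bfx\, g_1(y)\,\bfz)=g(\bfx y\bfz)$; together these are exactly (\ref{eq:saf765p}). The associativity of $g_n=f$ then follows because both sides of each of the $n-1$ defining equations are obtained from $g(x_1\cdots x_{2n-1})$ by inserting an inner $g$ around a block of length $n$, and the reduction formula (\ref{eq:saf76sf5p}) follows likewise by removing the inner $g$ around the leading block of length $m-n+1$; in each case one only has to check that the relevant substrings have length congruent to $1$ modulo $n-1$, so that $g$ is defined on them.

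The real work is the converse $(i)\Rightarrow(iii)$, which is the generalized associativity of $n$-ary semigroups and which I expect to be the main obstacle. I would prove $g(\bfx\, g(\bfy)\,\bfz)=g(\bfx\bfy\bfz)$, for $\bfy\in I^{(n)}$ and $\bfx\bfy\bfz\in I^{(n)}$, by induction on $|\bfy|$. The case $|\bfy|=1$ is the second identity in (\ref{eq:saf765p}). The decisive base case is $|\bfy|=n$: here one must show that performing the single application $f$ on $n$ consecutive entries before the rest does not change the left-nested value $g_m$, which is the classical fact of D\"ornte and Post obtained by using the $n-1$ associativity equations of $f$ to slide that inner application one position at a time until it reaches the front, where the definition of $g_m$ applies directly. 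For $|\bfy|>n$ I would use (\ref{eq:saf76sf5p}) to write $g(\bfy)=f(g(\bfy')\,\bfy'')$ with $\bfy=\bfy'\bfy''$, $|\bfy''|=n-1$ and $\bfy'\in I^{(n)}$ shorter than $\bfy$; applying the $|\bfy|=n$ case to the outer length-$n$ block and then the induction hypothesis to $\bfy'$ finishes the step.

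For $(ii)\Leftrightarrow(iii)$, the direction $(iii)\Rightarrow(ii)$ is immediate: applying $(iii)$ to both decompositions gives $g(\bfx\, g(\bfy)\,\bfz)=g(\bfx\bfy\bfz)=g(\bfx'\bfy'\bfz')=g(\bfx'\, g(\bfy')\,\bfz')$, and $g_1\circ g=g$ is the case $\bfx=\bfz=\varepsilon$. The converse $(ii)\Rightarrow(iii)$ is the one spot where the hypothesis $g_1\circ g=g$ is genuinely needed, since for $n\geqslant 3$ one has $\varepsilon\notin I^{(n)}$ and so, unlike in the binary setting, the inner block cannot be taken empty. The device is to take the inner block to be the entire string: as $\bfy$ and $\bfx\bfy\bfz$ both lie in $I^{(n)}$, reassociation gives $g(\bfx\, g(\bfy)\,\bfz)=g\big(g(\bfx\bfy\bfz)\big)$, and $g_1\circ g=g$ collapses the right-hand side to $g(\bfx\bfy\bfz)$.

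Finally, for $(iv)\Leftrightarrow(iii)$, the implication $(iii)\Rightarrow(iv)$ is proved by deleting the inner applications $g(\bfx_1),\ldots,g(\bfx_n)$ one at a time, each deletion being an instance of $(iii)$ and each intermediate string staying in $I^{(n)}$ because every $|\bfx_i|\equiv 1\pmod{n-1}$; once more $g_1\circ g=g$ is the degenerate case. For $(iv)\Rightarrow(iii)$ I would first upgrade the $n$-block identity to an arbitrary number of blocks by induction (in the manner of (\ref{eq:saf76sf5p})) and then recover the padding identity from the all-singletons instance, using $g_1\circ g=g$ to identify $g(y)$ with $g(g_1(y))$; reassembling these yields $(iii)$. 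The entire difficulty of the proposition is thus concentrated in the single step $(i)\Rightarrow(iii)$; everything else is string bookkeeping, the recurring subtlety being only that every argument of $g$ must have length congruent to $1$ modulo $n-1$.
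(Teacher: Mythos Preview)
Your proposal is correct and follows essentially the same plan as the paper: $(iii)$ is the hub, the easy implications out of $(iii)$ are dismissed quickly, $(ii)\Rightarrow(iii)$ is obtained by taking the inner block to be the whole string and invoking $g_1\circ g=g$, and $(i)\Rightarrow(iii)$ is the general-associativity argument for the $n$-ary semigroup (the paper phrases it as ``rewrite in nested $g_n$'s and slide them all to the left,'' which is your induction on $|\bfy|$ with the $|\bfy|=n$ sliding step). The only minor variation is in $(iv)\Rightarrow(iii)$: the paper argues directly by writing $\bfx\,g(\bfy)\,\bfz=t_1\cdots t_m$, applying $(iv)$ with blocks $t_1,\ldots,t_{n-1},\,t_n\cdots t_m$, and recursing on the tail until $g(\bfy)$ falls among the singleton blocks (where $g_1\circ g=g$ turns $g(g(\bfy))$ back into $g(\bfy)$), rather than first upgrading to an $m$-block identity as you do.
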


\begin{proof}
Implications $(iii)\Rightarrow (i)$, $(iii)\Rightarrow (ii)$, and $(iii)\Rightarrow (iv)$ are easy to verify.

To prove $(ii)\Rightarrow (iii)$ simply take $\bfy'=\bfx\bfy\bfz$ (i.e., $\bfx'\bfz'=\varepsilon$).

Let us now prove that $(iv)\Rightarrow (iii)$. Let $\bfx\bfy\bfz\in I^{(n)}$ such that $\bfy\in I^{(n)}$. We write $\bfx\, g(\bfy)\bfz=t_1\cdots
t_m$, with $t_k=g(\bfy)$. By $(iv)$ we have
$$
g(\bfx\, g(\bfy)\bfz) = g(t_1\cdots t_m) = g\big(g(t_1)\cdots g(t_{n-1})g(t_n\cdots t_m)\big).
$$
If $k\leqslant n-1$, then
\begin{eqnarray*}
g(\bfx\, g(\bfy)\bfz) &=& g\big(g(t_1)\cdots g(t_k)\cdots g(t_{n-1})g(t_n\cdots t_m)\big)\\
&=& g\big(g(t_1)\cdots g(\bfy)\cdots g(t_{n-1})g(t_n\cdots t_m)\big)%
~=~ g(\bfx\bfy\bfz).
\end{eqnarray*}
If $k\geqslant n$, we proceed similarly with $g(t_n\cdots t_m)$, unless $n=m$ in which case the result follows immediately.

Let us establish that $(i)\Rightarrow (iii)$. We only need to prove that $g(\bfx\, g(\bfy)\bfz)=g(\bfx\bfy\bfz)$ for every $\bfx\bfy\bfz\in
I^{(n)}$ such that $|\bfy|\geqslant 2$ and $|\bfx\bfz|\geqslant 1$. Using (\ref{eq:saf76sf5p}) twice and the associativity of $g_n$, we can
rewrite the function $\bfx\bfy\bfz\mapsto g(\bfx\, g(\bfy)\bfz)$ in terms of nested $g_n$'s only. Then, using the associativity of $g_n$ again,
we can move all the $g_n$'s to the left to obtain the right-hand side of $(\ref{eq:saf76sf5p})$, which reduces to $g(\bfx\bfy\bfz)$.

To illustrate, consider the following example with $n=3$:
\begin{eqnarray*}
g(x_1x_2x_3\, g(x_4x_5x_6x_7x_8)x_9) &=& g(x_1\, g(x_2x_3\, g(x_4\, g(x_5x_6x_7)x_8))x_9)\\
&=& g(g(g(g(x_1x_2x_3)x_4x_5)x_6x_7)x_8x_9)\\
&=& g(x_1x_2x_3x_4x_5x_6x_7x_8x_9).\qedhere
\end{eqnarray*}
\end{proof}

\begin{remark}
Proposition~\ref{prop:as8df65} follows from Proposition~\ref{prop:as8df65p}. Note that the condition $g_1\circ g=g$ is not needed in assertions
$(ii)$ and $(iv)$ of Proposition~\ref{prop:as8df65} since $I^*$ is used instead of $I^{(n)}$, thus allowing the use of the empty string
$\varepsilon$.
\end{remark}

\section{Proof of the Main Theorem}

It is easy to show that the condition in the Main Theorem is sufficient. To show that the condition is necessary, let $I$ be a nontrivial real
interval, let $f\colon I^n\to I$ be a continuous, symmetric, cancellative, and associative function, and let $g\colon I^{(n)}\to I$ be the
unique $n$-associative extension of $f$ such that $g_1=\mathrm{id}$ (see the observation following Definition~\ref{de:a7ds4f}).

\begin{claim}\label{claim:sdf3458ds}
$f$ is strictly increasing in each variable.
\end{claim}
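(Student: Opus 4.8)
The plan is to establish strict monotonicity in each variable, then use symmetry and continuity to fix the direction as increasing. Since $f$ is symmetric, it suffices to prove strict monotonicity in the first variable, say; the behavior in the other variables follows identically. First I would fix an arbitrary tuple $\bfz \in I^{n-1}$ and consider the single-variable section $h_{\bfz}\colon I \to I$ defined by $h_{\bfz}(x) = f(x\bfz)$. Cancellativity says precisely that each such section is one-to-one, and continuity of $f$ forces $h_{\bfz}$ to be continuous. A continuous injection on an interval is necessarily strictly monotonic (this is the standard consequence of the intermediate value theorem), so each section $h_{\bfz}$ is either strictly increasing or strictly decreasing.

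The crux is to rule out the possibility that the direction of monotonicity \emph{depends} on the choice of $\bfz$, and then to rule out the decreasing alternative altogether. For the first point, I would argue by a connectedness/continuity argument: the sign of monotonicity of $h_{\bfz}$ is a locally constant, $\{+,-\}$-valued function of the parameter $\bfz$ ranging over the connected set $I^{n-1}$, so it must be globally constant. Concretely, if $h_{\bfz}$ were increasing for one value of $\bfz$ and decreasing for another, then by continuity of $f$ in all variables jointly one could find an intermediate tuple for which the section is neither strictly increasing nor strictly decreasing, contradicting injectivity on that section. Thus $f$ is monotonic in its first variable in a single uniform direction, and by symmetry in every variable in that same direction.

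It then remains to exclude the globally decreasing case. Here is where I expect the main obstacle, and where associativity together with symmetry must finally be invoked — monotonicity alone cannot detect the sign, as Example~\ref{ex:asd6} shows in the non-symmetric setting. The plan is to exploit the extension $g\colon I^{(n)}\to I$ and the identity $g(g(\bfx_1)\cdots g(\bfx_n)) = g(\bfx_1\cdots\bfx_n)$ from Proposition~\ref{prop:as8df65p}$(iv)$. Suppose for contradiction that $f$ is strictly decreasing in each argument. Applying $f$ twice, or equivalently comparing $g$ on a string of length $n$ against $g$ on a string of length $2n-1$ obtained by expanding one entry, the composition of two decreasing maps yields an increasing dependence on certain pooled variables; symmetry guarantees that all variables enter on equal footing, so one can set up a monotonicity comparison that the two different bracketings must both satisfy. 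The contradiction arises because the parity effects that save the alternating-sum example are washed out by symmetry: with all arguments treated identically, a strictly decreasing symmetric $f$ forces $f(x^n)$ to move opposite to $x$, while associativity (via $g(x^n g(x^n)\cdots) = g(x^{n^2-\cdots})$ type identities comparing $g_n$ and $g_{2n-1}$) forces it to move with $x$. I would make this precise by choosing a concrete short composite identity, evaluating both sides as functions of a single scalar $x$ (setting all free entries equal), and observing that the two computed directions of monotonicity in $x$ disagree. This yields the contradiction and establishes that $f$ is strictly increasing in each variable.
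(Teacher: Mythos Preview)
Your first two steps are sound, and in fact more careful than the paper: continuity plus cancellativity makes each one-variable section strictly monotone, and your connectedness argument on the parameter space $I^{n-1}$ pins the direction uniformly. The paper silently assumes this uniformity.

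The trouble is in your final step. You propose to ``set all free entries equal'' to a single $x$ and compare two bracketings of $g$ on the diagonal. But once you re-bracket, the variable $x$ sits in \emph{several} slots of a single $f$, and a strictly decreasing $f$ then pulls in opposite directions through those slots; the net sign of $x\mapsto g_{2n-1}(x^{2n-1})$ (or similar) cannot be read off. Your earlier remark --- ``comparing $g$ on a string of length $2n-1$ obtained by expanding one entry'' --- was already the right move; the diagonal specialization undoes it.

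The clean contradiction keeps exactly one free variable. Fix $\bfy\in I^{n-1}$, $u\in I$, $\mathbf{v}\in I^{n-2}$ and look at
\[
x \;\longmapsto\; f\big(f(x\bfy)\,u\mathbf{v}\big) \;=\; f\big(x\,f(\bfy u)\,\mathbf{v}\big),
\]
the equality being associativity. If $f$ is (uniformly) strictly decreasing in its first argument, the left side is a composition of two strictly decreasing one-variable maps, hence strictly increasing in $x$; the right side is a single section, hence strictly decreasing in $x$. That is the contradiction. Symmetry is used only afterwards, to transport ``increasing in the first variable'' to every variable.
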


\begin{proof}
Since $f$ is continuous and cancellative, it must be strictly monotonic in each variable. Suppose it is strictly decreasing in the first
variable. Then, by associativity, for every $\bfy\in I^{n-1}$, $u\in I$, and $\mathbf{v}\in I^{n-2}$, the unary function $x\mapsto
f(f(x\bfy)u\mathbf{v})=f(x\, f(\bfy u)\mathbf{v})$ is both strictly increasing and strictly decreasing, which leads to a contradiction. Thus $f$
must be strictly increasing in the first variable and hence in every variable by symmetry.
\end{proof}

An element $e\in I$ is said to be an \emph{idempotent} for $f$ if $f(e^n)=e$. For instance, any real number is an idempotent for the function
defined in Example~\ref{ex:asd6}.

\begin{claim}\label{claim:sdf8ds}
There cannot be two distinct idempotents for $f$.
\end{claim}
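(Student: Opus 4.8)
The plan is to argue by contradiction, combining the strict increasingness established in Claim~\ref{claim:sdf3458ds} with a single instance of associativity. Suppose $f$ had two distinct idempotents, say $e_1<e_2$, so that $f(e_1^n)=e_1$ and $f(e_2^n)=e_2$. First I would introduce the auxiliary value $c:=f(e_1^{n-1}e_2)$ and record that $c<e_2$: comparing $f(e_1^{n-1}e_2)$ with $f(e_2^{n-1}e_2)=f(e_2^n)=e_2$, the two inputs differ only in the first $n-1$ coordinates, where $e_1<e_2$, so strict monotonicity in each variable forces $c<e_2$ (and likewise $c>e_1$, although only $c<e_2$ will be used).

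Next I would evaluate $f$ on the $(2n-1)$-string $e_1^n e_2^{n-1}$ in two ways, via the defining system of associativity equations taken at $i=1$ (equivalently, via the $n$-associative extension $g$ with $g_1=\mathrm{id}$ and assertion $(iii)$ of Proposition~\ref{prop:as8df65p}). Grouping the first $n$ letters collapses $e_1^n$ to the idempotent $e_1$ and yields $f\big(f(e_1^n)\,e_2^{n-1}\big)=f(e_1 e_2^{n-1})=:d$. Grouping instead the inner block of length $n$ formed by the letters in positions $2,\dots,n+1$, namely $e_1^{n-1}e_2$, whose value is $c$, gives $f(e_1\,c\,e_2^{n-2})$. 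Since the equation at $i=1$ asserts exactly the equality of these two bracketings, I obtain $f(e_1\,c\,e_2^{n-2})=d$. (In the language of the extension, one writes $\bfx=e_1$, $\bfy=e_1^{n-1}e_2\in I^{(n)}$, $\bfz=e_2^{n-2}$, and invokes $g(\bfx\,g(\bfy)\,\bfz)=g(\bfx\bfy\bfz)$; the total length $1+n+(n-2)=2n-1$ and the block length $n$ are both $\equiv 1\pmod{n-1}$, so everything lies in $I^{(n)}$ and assertion $(iii)$ applies.)

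Finally I would read off the contradiction from strict monotonicity: because $c<e_2$ and $f$ is strictly increasing in its second variable, $f(e_1\,c\,e_2^{n-2})<f(e_1\,e_2\,e_2^{n-2})=f(e_1 e_2^{n-1})=d$, which clashes with the identity $f(e_1\,c\,e_2^{n-2})=d$ just derived. Hence there cannot be two distinct idempotents, and $f$ admits at most one.

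The only genuine subtlety — the analogue of the difficulty already present in the binary case treated in \cite{CraPal89} — is choosing the string and the regrouping so that one bracketing absorbs a full block $e_1^n$ into the idempotent $e_1$ while the other isolates the strictly smaller intermediate value $c$ in a coordinate where strict monotonicity bites; checking that the inner block has admissible length and that the whole string lies in $I^{(n)}$ is precisely what licenses the associativity step. Beyond that, no new idea is required, since the full symmetry of $f$ has already been spent in upgrading Claim~\ref{claim:sdf3458ds} to strict increasingness in \emph{every} variable, which is all the monotonicity the above estimates need.
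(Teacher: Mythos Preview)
Your proof is correct and rests on the very same associativity identity the paper exploits, namely
\[
f(e_1\,e_2^{\,n-1})=f\big(f(e_1^{\,n})\,e_2^{\,n-1}\big)=f\big(e_1\,f(e_1^{\,n-1}e_2)\,e_2^{\,n-2}\big),
\]
obtained from the string $e_1^{\,n}e_2^{\,n-1}$. The only difference is in how the contradiction is extracted: the paper applies \emph{cancellation} in the second slot to deduce $e_2=f(e_1^{\,n-1}e_2)$, then by symmetry also derives $e_1=f(e_2^{\,n-1}e_1)$, and finally plays these two equalities against strict monotonicity. You instead bypass cancellation entirely, first bounding $c=f(e_1^{\,n-1}e_2)<e_2$ by monotonicity and then reading the contradiction $d<d$ straight off the identity. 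Your route is marginally shorter (no need for the ``similarly'' step or the case split $e<d$ versus $d<e$), while the paper's route has the mild advantage of producing the clean intermediate fact $e_2=f(e_1^{\,n-1}e_2)$; but the two arguments are really the same idea.
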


\begin{proof}
Otherwise, if $d$ and $e$ were distinct idempotents, we would have
$$
f(d\, e^{n-1})=f(f(d^n)\, e^{n-1})=f(d\, f(d^{n-1}e)\, e^{n-2})
$$
and hence (by cancellation), $e=f(d^{n-1}e)=f(e\, d^{n-1})$. Similarly, $d=f(e^{n-1}d)=f(d\, e^{n-1})$. Now, if $e<d$, then $d=f(d\, e^{n-1})<
f(d^{n-1}e)=e$ (by Claim~\ref{claim:sdf3458ds}), a contradiction. We arrive at a similar contradiction if $d<e$.
\end{proof}

Because of Claim~\ref{claim:sdf8ds}, there is a $c\in I$ such that either $c<f(c^n)$ or $c>f(c^n)$. We assume w.l.o.g.\ that the former holds and fix such a $c$. The latter case can be dealt with similarly.

\begin{claim}\label{lem:2}
For all fixed $x\in I$, we have $x<f(x\, c^{n-1})$. Thus the sequence $x_m=f(x_{m-1}c^{n-1})$ strictly increases, and $\lim x_m\notin I$ (hence
$\lim x_m=\sup I$ and $I$ is open from above).
\end{claim}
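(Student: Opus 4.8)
The plan is to study the map $\psi\colon I\to I$ given by $\psi(x)=f(x\,c^{n-1})$. By Claim~\ref{claim:sdf3458ds} it is continuous and strictly increasing, and our choice of $c$ gives $\psi(c)=f(c^n)>c$. Since $\psi-\mathrm{id}$ is continuous on the interval $I$ and positive at $c$, the desired inequality $x<\psi(x)$ will hold for \emph{every} $x\in I$ as soon as I know that $\psi$ has no fixed point: if $\psi(a)\leqslant a$ held for some $a$, then $\psi-\mathrm{id}$ would be $\leqslant 0$ at $a$ and $>0$ at $c$, so by the intermediate value theorem it would vanish somewhere between $a$ and $c$, producing a fixed point. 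Thus the whole first assertion reduces to excluding fixed points of $\psi$.

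The exclusion of fixed points is the step I expect to be the main obstacle, since it is where symmetry, cancellativity and associativity must be combined through the $n$-associative extension $g$. Suppose $f(p\,c^{n-1})=p$ for some $p\in I$; by symmetry of $f$ this also reads $g(c^{n-1}p)=f(c^{n-1}p)=p$. I would then evaluate $g$ on the single string $c^n p^{n-1}\in I^{2n-1}\subseteq I^{(n)}$ in two ways. Grouping the leftmost $n$ letters according to the defining recursion (\ref{eq:saf76sf5p}) gives
\[
g(c^n p^{n-1})=g_n\big(g_n(c^n)\,p^{n-1}\big)=f\big(f(c^n)\,p^{n-1}\big).
\]
On the other hand, writing $c^n p^{n-1}=\bfx\,\bfy\,\bfz$ with $\bfx=c$, $\bfy=c^{n-1}p\in I^{(n)}$, and $\bfz=p^{n-2}$, assertion $(iii)$ of Proposition~\ref{prop:as8df65p} together with $g(\bfy)=p$ gives
\[
g(c^n p^{n-1})=g\big(c\,g(c^{n-1}p)\,p^{n-2}\big)=g(c\,p^{n-1})=f(c\,p^{n-1}).
\]
Comparing the two evaluations yields $f\big(f(c^n)\,p^{n-1}\big)=f(c\,p^{n-1})$, and cancellativity in the first variable forces $f(c^n)=c$, contradicting $c<f(c^n)$. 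Hence $\psi$ has no fixed point and $x<\psi(x)$ for all $x\in I$.

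Finally I would read off the statement about the sequence. Since $x<\psi(x)$ everywhere, the sequence $x_m=\psi(x_{m-1})$ is strictly increasing, so it admits a limit $\ell$, either finite or $+\infty$, with $\ell=\sup_m x_m\leqslant\sup I$. This $\ell$ cannot lie in $I$: otherwise continuity of $\psi$ would give $\psi(\ell)=\lim_m\psi(x_m)=\lim_m x_{m+1}=\ell$, a fixed point, which has just been excluded. Since moreover $\ell\geqslant x_1>\inf I$ and $\ell\leqslant\sup I$, the only remaining possibility is that $\ell=\sup I$ with this supremum not attained; that is, $\lim_m x_m=\sup I\notin I$, so $I$ is open at its upper end, as claimed.
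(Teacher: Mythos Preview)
Your proof is correct. The fixed-point exclusion and the limit argument are both sound, and your two evaluations of $g(c^n p^{n-1})$ are exactly the associativity identity $f\big(f(c^n)\,x^{n-1}\big)=f\big(c\,f(c^{n-1}x)\,x^{n-2}\big)$ specialised to $x=p$.

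The paper uses this \emph{same} identity but applies it directly for arbitrary $x$, bypassing the intermediate-value reduction: from $c<f(c^n)$ and strict increase in the first slot one gets $f(c\,x^{n-1})<f\big(f(c^n)\,x^{n-1}\big)=f\big(c\,f(c^{n-1}x)\,x^{n-2}\big)$, and then strict monotonicity in the second slot yields $x<f(c^{n-1}x)=f(x\,c^{n-1})$ immediately. So your IVT step, while valid, is an avoidable detour: the algebraic core you isolated already delivers the strict inequality for every $x$, not just the absence of fixed points. Your treatment of the sequence and its limit is the same as the paper's.
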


\begin{proof}
Since $c<f(c^n)$, we have $f(c\, x^{n-1})<f(f(c^n)\, x^{n-1})=f(c\, f(c^{n-1}x)\, x^{n-2})$ and hence (by strict monotonicity)
$x<f(c^{n-1}x)=f(x\, c^{n-1})$. Thus $x_m=f(x_{m-1}c^{n-1})>x_{m-1}$. If $\lim x_m=x'$ and $x'\in I$, continuity gives the following:
$$
x'=\lim x_m=\lim f(x_{m-1}c^{n-1})=f(\lim x_{m-1}c^{n-1})=f(x'c^{n-1}),
$$
a contradiction. Thus $x'\notin I$, so $\lim x_m=\sup I$.
\end{proof}

Hereinafter we work on the extended real line so that suprema of arbitrary sets exist and all monotone sequences converge.

%

\begin{claim}\label{claim:9asf7a}
Let $x\in I$ and let $j,k,p,q\in\N$ such that $j+1,k,p,q+1\in A_n$. Then we have
$$
g(c^p)>g(x\, c^q)\quad\Leftrightarrow\quad g(c^{kp})>g(x^kc^{kq})\quad\Leftrightarrow\quad g(c^{p+j})>g(x\, c^{q+j}).
$$
The same equivalence holds if ``$<$'' or ``='' replaces ``$>$''.
\end{claim}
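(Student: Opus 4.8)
The plan is to reduce each comparison to the strict monotonicity of a single restriction $g_m$, using two structural features of the extension $g$ together with a ``block'' identity. First, every $g_m$ with $m\in A_n$ is strictly increasing in each argument: this follows by induction on $m$ from the recursion (\ref{eq:saf76sf5p}), the base cases being $g_n=f$ (strictly increasing in each variable by Claim~\ref{claim:sdf3458ds}) and $g_1=\mathrm{id}$. Second, every $g_m$ is symmetric. Indeed, by Proposition~\ref{prop:as8df65p}$(iii)$ one may bracket any $n$ consecutive entries of $g_m(x_1\cdots x_m)$ into a single application of $g_n=f$ and permute them freely; since $m\geqslant n$ whenever $m\in A_n$ and $m>1$, every adjacent transposition of positions lies inside some window of $n$ consecutive positions, and such permutations generate the full symmetric group, so $g_m$ is symmetric. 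The block identity states that if $k\in A_n$ and $\bfx_1,\ldots,\bfx_k\in I^{(n)}$ with $\bfx_1\cdots\bfx_k\in I^{(n)}$, then $g(\bfx_1\cdots\bfx_k)=g(g(\bfx_1)\cdots g(\bfx_k))$; it is proved by starting from the right-hand side (whose argument has length $k\in A_n$) and un-nesting the factors $g(\bfx_i)$ one at a time via Proposition~\ref{prop:as8df65p}$(iii)$, noting that after removing the first $i$ of them the argument $\bfx_1\cdots\bfx_i\,g(\bfx_{i+1})\cdots g(\bfx_k)$ still has length $\equiv 1\pmod{n-1}$ and hence lies in $I^{(n)}$.

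For the equivalence involving $c^{p+j}$ and $c^{q+j}$, observe that $j+1\in A_n$ forces $j\equiv 0\pmod{n-1}$, whence $p+j,\,q+j+1\in A_n$. Applying Proposition~\ref{prop:as8df65p}$(iii)$ with $\bfy=c^p$ and with $\bfy=x\,c^q$ yields $g(c^{p+j})=g(g(c^p)\,c^j)$ and $g(x\,c^{q+j})=g(g(x\,c^q)\,c^j)$. The two right-hand sides are values of $g_{j+1}$ that differ only in their first argument, namely $g(c^p)$ versus $g(x\,c^q)$; since $g_{j+1}$ is strictly increasing in that argument, $g(c^{p+j})$ and $g(x\,c^{q+j})$ compare in exactly the same way as $g(c^p)$ and $g(x\,c^q)$, simultaneously for ``$>$'', ``$<$'', and ``$=$''.

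For the equivalence involving the $k$-fold strings, set $s=g(c^p)$ and $t=g(x\,c^q)$. Since $c^{kp}=(c^p)^k$, the block identity with all blocks equal to $c^p$ gives $g(c^{kp})=g_k(s^k)$. The string $x^k c^{kq}$ is a rearrangement of $(x\,c^q)^k$, so symmetry of $g_{k(q+1)}$ gives $g(x^k c^{kq})=g((x\,c^q)^k)$, and the block identity with all blocks equal to $x\,c^q$ gives $g((x\,c^q)^k)=g_k(t^k)$. Finally, because $g_k$ is strictly increasing in each of its $k$ arguments, passing from $s^k$ to $t^k$ one coordinate at a time shows that $g_k(s^k)$ and $g_k(t^k)$ compare in the same way as $s$ and $t$ (and coincide precisely when $s=t$); hence $g(c^{kp})$ and $g(x^k c^{kq})$ compare exactly as $g(c^p)$ and $g(x\,c^q)$.

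The only routine part is the arity bookkeeping: one must check that each string produced along the way has length $\equiv 1\pmod{n-1}$, which is precisely what the hypotheses $j+1,k,p,q+1\in A_n$ ensure. I expect the main obstacle to be the symmetry of the restrictions $g_m$, since this is what licenses replacing $x^k c^{kq}$ by the rearrangement $(x\,c^q)^k$ and is the only place where the symmetry of $f$ enters the argument; all other steps use only associativity (through Proposition~\ref{prop:as8df65p}) and strict monotonicity.
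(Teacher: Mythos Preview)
Your proof is correct and follows essentially the same route as the paper's: rewrite $g(c^{kp})$ and $g(x^kc^{kq})$ as $g(g(c^p)^k)$ and $g(g(x\,c^q)^k)$ via the block identity and symmetry, then use strict monotonicity of $g_k$; likewise rewrite $g(c^{p+j})$ and $g(x\,c^{q+j})$ as $g(g(c^p)\,c^j)$ and $g(g(x\,c^q)\,c^j)$ and use monotonicity of $g_{j+1}$. You are simply more explicit than the paper in spelling out why every $g_m$ is symmetric and strictly increasing, and in upgrading Proposition~\ref{prop:as8df65p}$(iv)$ from $n$ blocks to $k\in A_n$ blocks via iterated use of $(iii)$---all of which the paper tacitly assumes.
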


\begin{proof}
Assume that $g(c^p)>g(x\, c^q)$. Then, by Proposition~\ref{prop:as8df65p}$(iv)$, Claim~\ref{claim:sdf3458ds}, and symmetry, we have $g(c^{kp})=g(g(c^p)^k)>g(g(x\, c^q)^k)=g(x^kc^{kq})$,
which proves the first equivalence (since the same conclusion clearly holds if ``$<$'' or ``='' replaces ``$>$''). For the second equivalence,
assume again that $g(c^p)>g(x\, c^q)$. Then, as before, we have $g(c^{p+j})=g(g(c^p)\, c^j)>g(g(x\, c^q)\, c^j)=g(x\, c^{q+j})$.
\end{proof}

Let $x$ be any fixed element of $I$. Define $S_x$ to be the subset of all rational numbers $r$ for which there exist $k,p,q\in\N$ such that
$k,p,q+1\in A_n$, $g(c^p)>g(x^kc^q)$, and $r=(p-q)/k$. Now, if $r=(p-q)/k=(p'-q')/k'$, then we have $pk'+q'k=p'k+qk'$ and it follows from
Claim~\ref{claim:9asf7a} that
\begin{eqnarray*}
g(c^p) > g(x^kc^q) & \Leftrightarrow & g(c^{pk'}) > g(x^{kk'}c^{qk'})\\
& \Leftrightarrow & g(c^{pk'+q'k}) > g(x^{kk'}c^{qk'+q'k})\\
& \Leftrightarrow & g(c^{p'k+qk'}) > g(x^{kk'}c^{q'k+qk'})\\
& \Leftrightarrow & g(c^{p'k}) > g(x^{kk'}c^{q'k})\\
& \Leftrightarrow & g(c^{p'}) > g(x^{k'}c^{q'}).
\end{eqnarray*}
Hence $S_x$ is in fact the subset of rational numbers $r$ for which every representation $r=(p-q)/k$ with $k,p,q+1\in A_n$ satisfies $g(c^p) >
g(x^kc^q)$.

\begin{claim}\label{claim:e6w7r}
The set $S=\{\frac{p-q}k : k,p,q+1\in A_n\}$ is dense in $\R$.
\end{claim}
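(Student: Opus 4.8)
The plan is to pin down $S$ as an explicit set of rationals and then exploit the fact that the integers congruent to $1$ modulo $n-1$ are evenly spaced, so that dividing such an integer by a large admissible denominator yields a good approximation of any prescribed real number. I would not invoke Claim~\ref{claim:9asf7a}; this is a purely arithmetic statement about the index set $A_n$.

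First I would unravel the membership conditions. Since $A_n$ consists precisely of the integers of the form $1+t(n-1)$ with $t\in\N$, the requirements $k,p\in A_n$ mean $k=1+a(n-1)$ and $p=1+b(n-1)$ for some $a,b\in\N$, while $q+1\in A_n$ means $q=c(n-1)$ for some $c\in\N$. Hence $p-q=1+(b-c)(n-1)$. As $b,c$ range independently over $\N$, the difference $b-c$ ranges over all of $\mathbb{Z}$; conversely, given any integer $m\equiv 1\pmod{n-1}$, write $m=1+t(n-1)$ and realize $m=p-q$ by taking $(b,c)=(t,0)$ if $t\geqslant 0$ and $(b,c)=(0,-t)$ if $t<0$. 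Thus the numerators occurring in $S$ are exactly the integers $\equiv 1\pmod{n-1}$ and the denominators $k$ are exactly the positive integers $\equiv 1\pmod{n-1}$, so that
$$
S=\Big\{\tfrac{m}{k}:m\in\mathbb{Z},\ k\in\N,\ k\geqslant 1,\ m\equiv k\equiv 1\!\!\pmod{n-1}\Big\}.
$$

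Next I would prove density directly from this description. Fix $\alpha\in\R$ and $\varepsilon>0$, and choose $a\in\N$ large enough that $k:=1+a(n-1)$ satisfies $(n-1)/k<\varepsilon$. The integers $\equiv 1\pmod{n-1}$ form an arithmetic progression of common difference $n-1$, and any half-open interval of length $n-1$ contains exactly $n-1$ consecutive integers, hence exactly one representative of each residue class modulo $n-1$; applying this to $[k\alpha,\,k\alpha+(n-1))$ yields an integer $m\equiv 1\pmod{n-1}$ with $|m-k\alpha|<n-1$. Then $m/k\in S$ and $|m/k-\alpha|<(n-1)/k<\varepsilon$, and since $\alpha,\varepsilon$ were arbitrary this proves that $S$ is dense in $\R$.

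The computations are entirely elementary, so there is no serious obstacle; the only point demanding a little care is the bookkeeping that an arbitrary target numerator $m\equiv 1\pmod{n-1}$ is genuinely representable as a difference $p-q$ with $p,q+1\in A_n$, which is precisely the sign-case choice of $(b,c)$ recorded above. (For $n=2$ the description degenerates to $S\supseteq\Q$, making density immediate.)
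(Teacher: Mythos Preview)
Your proof is correct. The paper takes a slightly different route: instead of characterizing $S$ and then approximating an arbitrary real $\alpha$ directly, it observes that for any $a,b\in\N$ the sequence
\[
x_m=\frac{1\pm a\,m\,(n-1)}{1+b\,m\,(n-1)}
\]
lies in $S$ and converges to $\pm a/b$, so $S$ is dense in $\Q$ and hence in $\R$. Your argument buys an explicit description of $S$ and avoids the two-step passage through $\Q$; the paper's argument is shorter to state and sidesteps the bookkeeping about realizing a given numerator as $p-q$. Both are entirely elementary and neither uses Claim~\ref{claim:9asf7a}.
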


\begin{proof}
For every $a,b\in\N$, the sequence
$$
x_m=\frac{1\pm a\, m\,(n-1)}{1+b\, m\, (n-1)}
$$
of $S$ converges to $\pm a/b$. Thus $S$ is dense in $\Q$ and hence (by transitivity) in $\R$.
\end{proof}

\begin{claim}\label{claim:7sd65}
Any two numbers $r,r'\in S$ may be written $r=(p-q)/k$, $r'=(p'-q)/k$ for suitable $k,p,p',q+1\in A_n$.
\end{claim}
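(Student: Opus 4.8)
The plan is to read Claim~\ref{claim:7sd65} as a purely arithmetic statement about the set $A_n$, using that $m\in A_n$ means exactly $m\in\N$ with $m\equiv 1\pmod{n-1}$. Thus the requirement $k,p,p',q+1\in A_n$ translates into the congruences $k\equiv p\equiv p'\equiv 1\pmod{n-1}$ and $q\equiv 0\pmod{n-1}$, together with the positivity conditions $k,p,p'\geqslant 1$ and $q\geqslant 0$. Starting from arbitrary valid representations $r=(p_1-q_1)/k_1$ and $r'=(p_2-q_2)/k_2$ with $k_i,p_i,q_i+1\in A_n$, the goal is to produce new representations sharing a common denominator $k$ and a common subtracted term $q$.

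First I would pass to a common denominator by setting $k=k_1k_2$. Since a product of integers each congruent to $1\pmod{n-1}$ is again congruent to $1$, we have $k\in A_n$. Moreover $rk=(p_1-q_1)\,k_2$ and $r'k=(p_2-q_2)\,k_1$ are integers, and because $p_i-q_i\equiv 1$ and $k_j\equiv 1\pmod{n-1}$, both $rk$ and $r'k$ are congruent to $1\pmod{n-1}$.

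Next I would align the two representations by adding a common shift. I would choose any $q\in\N$ with $q\equiv 0\pmod{n-1}$ and $q\geqslant\max\{0,\,1-rk,\,1-r'k\}$; such a $q$ exists since the multiples of $n-1$ are unbounded. Setting $p=rk+q$ and $p'=r'k+q$ yields $r=(p-q)/k$ and $r'=(p'-q)/k$ by construction. The congruences $rk\equiv r'k\equiv 1$ and $q\equiv 0\pmod{n-1}$ give $p\equiv p'\equiv 1\pmod{n-1}$, while the lower bound on $q$ guarantees $p,p'\geqslant 1$; hence $k,p,p',q+1\in A_n$, as required.

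I expect no serious obstacle here, as the argument is essentially routine bookkeeping modulo $n-1$. The only point needing care is to meet the congruence conditions and the positivity of $p$ and $p'$ at the same time, which is precisely why $q$ must be taken to be a \emph{sufficiently large} multiple of $n-1$ rather than merely a nonnegative one.
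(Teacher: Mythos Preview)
Your argument is correct and follows essentially the same route as the paper: both pass to the common denominator $k=k_1k_2$ and then shift by a suitable $q\equiv 0\pmod{n-1}$ to make $p=rk+q$ and $p'=r'k+q$ land in $A_n$. The only cosmetic difference is that the paper, after assuming $r'>r$, picks the specific shift $\tilde q=|\tilde kr-1|$, whereas you take any sufficiently large multiple of $n-1$; your version avoids the WLOG ordering at the cost of a slightly less explicit $q$.
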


\begin{proof}
Let $r=(p-q)/k$ and $r'=(p'-q')/k'$, with $k,k',p,p',q+1,q'+1\in A_n$. Assume w.l.o.g.\ that $r'>r$. Setting $\tilde{k}=k\, k'$,
$\tilde{q}=|\tilde{k}\, r-1|$, $\tilde{p}=\tilde{k}\, r+\tilde{q}$, and $\tilde{p}'=\tilde{k}\, r'+\tilde{q}$, we have
$r=(\tilde{p}-\tilde{q})/\tilde{k}$, $r'=(\tilde{p}'-\tilde{q})/\tilde{k}$ with $\tilde{k},\tilde{p},\tilde{p}',\tilde{q}+1\in A_n$.
\end{proof}

\begin{claim}\label{claim:1}
$S_x$ is a nonempty, proper, and upper subset of $S$ (``upper'' means that if $r\in S_x$ and $r'\in S$, $r'>r$, then $r'\in S_x$).
\end{claim}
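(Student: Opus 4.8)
The plan is to establish four separate properties of $S_x$: nonemptiness, properness, the upper-set property, and (implicitly) that it is a genuine subset of $S$. The last is immediate from the definition, since every $r\in S_x$ is of the form $(p-q)/k$ with $k,p,q+1\in A_n$, hence lies in $S$.

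First I would prove that $S_x$ is nonempty. The idea is to exploit Claim~\ref{lem:2}, which guarantees that iterating $y\mapsto f(y\,c^{n-1})$ drives any starting point up to $\sup I$. Concretely, I would show that for a suitable large number of appended $c$'s one has $g(c^p)>g(x\,c^q)$, i.e.\ that by overwhelming the fixed $x$ with enough copies of $c$ we surpass it. Using Proposition~\ref{prop:as8df65p}$(iv)$ to regroup strings and Claim~\ref{lem:2} to compare $g(x\,c^{n-1})$ with $x$ repeatedly, I expect to find explicit $k,p,q$ with $k,p,q+1\in A_n$ witnessing membership, so $(p-q)/k\in S_x$.

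Next, properness: I would argue symmetrically that some rational fails to lie in $S_x$, equivalently that for suitable indices $g(c^p)<g(x^kc^q)$ (or $=$). Here one plays $x$ against $c$: by choosing few $c$'s relative to many copies of $x$, the string dominated by $x$ cannot be exceeded by $g(c^p)$. Again the tools are Proposition~\ref{prop:as8df65p}$(iv)$ and the strict monotonicity from Claim~\ref{claim:sdf3458ds}, together with the characterization (established just before the claim) that $r\in S_x$ precisely when \emph{every} representation $r=(p-q)/k$ satisfies the strict inequality; exhibiting one representation with the reversed or equality relation shows $r\notin S_x$.

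The main obstacle, and the heart of the claim, is the upper-set property: if $r\in S_x$ and $r'\in S$ with $r'>r$, then $r'\in S_x$. The plan is to invoke Claim~\ref{claim:7sd65} to write $r=(p-q)/k$ and $r'=(p'-q)/k$ with a \emph{common} denominator $k$ and a \emph{common} $q$, and with $p'>p$ (forced by $r'>r$). Then $r\in S_x$ gives $g(c^p)>g(x^kc^q)$, and since $p'>p$ with $p'\equiv p\pmod{n-1}$ one has $p'=p+j$ for some $j$ with $j+1\in A_n$; appending $j$ further copies of $c$ and using Claim~\ref{claim:9asf7a} (the ``$+j$'' equivalence applied to the base inequality, combined with strict monotonicity in the extra coordinates) yields $g(c^{p'})=g(c^{p+j})>g(x^kc^q)$ directly, or more carefully $g(c^{p'})\geqslant g(c^p)>g(x^kc^q)$ by Claim~\ref{lem:2} applied to the string $c^p$. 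Hence $r'=(p'-q)/k\in S_x$. The delicate point is ensuring the common-denominator normalization from Claim~\ref{claim:7sd65} keeps all indices inside $A_n$ while preserving the strict inequality, which is exactly why that claim was isolated beforehand.
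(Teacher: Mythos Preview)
Your plan is essentially the same as the paper's and is correct in outline. For the upper-set property you use exactly the paper's argument: normalize via Claim~\ref{claim:7sd65} to common $k$ and $q$, note $p'=p+j(n-1)$, and then obtain $g(c^{p'})>g(c^p)>g(x^kc^q)$ by repeatedly applying Claim~\ref{lem:2}. (Your first suggestion, invoking Claim~\ref{claim:9asf7a}, does not quite apply since that claim adds $j$ copies of $c$ to \emph{both} sides; your ``more carefully'' alternative via Claim~\ref{lem:2} is the right one and is precisely what the paper does.) For nonemptiness your idea also matches: the paper simply takes $k=1$, $q=n-1$ and uses Claim~\ref{lem:2} with starting point $c$ to find $p\in A_n$ with $g(c^p)>g(x\,c^{n-1})$.

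The only place your plan is slightly off is properness. You propose to beat $g(c^p)$ by taking ``many copies of $x$''; but nothing so far tells you that piling up $x$'s increases the value (indeed if $x<c$ and $x$ happens to satisfy $f(x^n)<x$ this would go the wrong way). The paper instead keeps $k=1$ and $p=1$ and increases $q$: by Claim~\ref{lem:2} applied with starting point $x$, the sequence $g(x\,c^{m(n-1)})$ tends to $\sup I$, so eventually $g(x\,c^q)>c=g(c)$, giving $(1-q)/1\notin S_x$. This is the clean route; your tools (Proposition~\ref{prop:as8df65p}$(iv)$ and Claim~\ref{claim:sdf3458ds}) are not the relevant ones here---it is again Claim~\ref{lem:2} that does the work.
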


\begin{proof}
To show that $S_x$ is an upper subset, let $r=(p-q)/k\in S_x$ and $r'=(p'-q)/k>r$ (cf.\ Claim~\ref{claim:7sd65}). Then $p'>p$ and, since
$p,p'\in A_n$, we have $p'=p+j(n-1)$ for some integer $j\geqslant 1$. Using the definition of $S_x$ and the first part of Claim~\ref{lem:2}, we
obtain
\begin{eqnarray*}
g(x^kc^q) ~<~ g(c^p) & < & g(g(c^p)\, c^{n-1}) ~=~ g(c^pc^{n-1})\\
& < & g(g(c^pc^{n-1})\, c^{n-1}) ~=~ g(c^pc^{2(n-1)})\\
& < & \cdots\\
& < & g(c^pc^{j(n-1)}) ~=~ g(c^{p'}).
\end{eqnarray*}
Hence $r'\in S_x$. Now, by Claim~\ref{lem:2}, $\lim f(c^{m(n-1)+1})=\sup I>g(x\, c^{n-1})$, and hence there is some $p\in A_n$ with
$g(c^p)>g(x\, c^{n-1})$. Hence $r=(p-(n-1))/1\in S_x$, and so $S_x$ is nonempty. Similarly, since $\lim g(x\, c^{m(n-1)})=\sup I$, there must a
$q$ such that $q+1\in A_n$ and $g(c)<g(x\, c^q)$, and so $(1-q)/1\notin S_x$.
\end{proof}

Now, by Claim \ref{claim:1}, $S_x$ is precisely the set of elements in $S$ which are greater than (and possibly equal to) $\inf S_x$. Using this
fact, let $\varphi\colon I\to\R$ be the function given by
\[
\varphi(x):= \inf S_x\, .
\]

\begin{claim}\label{claim:2}
If $g(c^p)=g(x^kc^q)$, then $\varphi(x)=(p-q)/k$. In particular, $\varphi(c)=1$.
\end{claim}

\begin{proof}
Note that $g(c^p)=g(x^kc^q)$ implies $r=(p-q)/k\notin S_x$. Moreover, by Claim~\ref{claim:1} it follows that if $r'=(p'-q)/k>r$ (resp.\ $r'<r$),
then $g(c^{p'})>g(c^p)=g(x^kc^q)$ (resp.\ $g(c^{p'})<g(c^p)=g(x^kc^q)$), and hence $r'\in S_x$ (resp.\ $r'\not\in S_x$). Thus $\inf S_x=(p-q)/k$
by Claim~\ref{claim:e6w7r}. For the last claim just note that $g(c^{q+1})=g(c\, c^q)$.
\end{proof}

\begin{claim}\label{claim:4}
We have $\varphi(g(x_1\cdots x_n))=\sum_{i=1}^n\varphi(x_i)$ for every $x_1,\ldots, x_n\in I$.
\end{claim}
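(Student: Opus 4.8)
The plan is to show that $\varphi$ converts $g$ into addition by squeezing $\varphi(y)$, where $y:=g(x_1\cdots x_n)$, between rational approximants of $\sum_{i=1}^n\varphi(x_i)$ drawn from the set $S$, which is dense by Claim~\ref{claim:e6w7r}. Throughout I would use two structural facts: being a composition of copies of the strictly increasing map $g_n=f$ (Claim~\ref{claim:sdf3458ds}), every restriction $g_m$ is strictly increasing in each variable; and $g$ is symmetric at every arity, inherited from the symmetry of $f$.

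The computational heart is the identity
\[
g(y^k\, c^{q_1+\cdots+q_n}) \;=\; g\big(x_1^k c^{q_1}\cdots x_n^k c^{q_n}\big),
\]
valid for every common $k\in A_n$ and all $q_i$ with $q_i+1\in A_n$. I would prove it by regrouping: by Proposition~\ref{prop:as8df65p}$(iv)$ the right-hand side equals $g\big(g(x_1^kc^{q_1})\cdots g(x_n^kc^{q_n})\big)$, while symmetry lets me gather all the $c$'s and rewrite $g(x_1^k\cdots x_n^k\,c^{\sum q_i})=g\big((x_1\cdots x_n)^k c^{\sum q_i}\big)$; inserting a $g$ around each length-$n$ block $x_1\cdots x_n$ via Proposition~\ref{prop:as8df65p}$(iii)$ then collapses this to $g(y^k c^{\sum q_i})$. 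Each required membership in $A_n$ is checked by a residue count mod $n-1$ (recall $k\equiv1$, $q_i\equiv0$, and $n\equiv1$).

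Combining this identity with Proposition~\ref{prop:as8df65p}$(iv)$ in the form $g\big(g(c^{p_1})\cdots g(c^{p_n})\big)=g(c^{p_1+\cdots+p_n})$ and strict monotonicity yields the comparison I really want: writing $r_i=(p_i-q_i)/k$ over a common denominator $k$ (obtained by the scaling of Claim~\ref{claim:9asf7a}, which preserves the $A_n$-conditions), I get
\[
g(c^{p_i})>g(x_i^kc^{q_i})\ \text{for all }i\ \Longrightarrow\ g(c^{\sum p_i})>g(y^k c^{\sum q_i}),
\]
that is, $r_i\in S_{x_i}$ for all $i$ forces $\sum_i r_i\in S_y$; the reverse implication with ``$<$'' holds verbatim. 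Here I rely on the representation-independence of $S_{x_i}$ (the paragraph following Claim~\ref{claim:9asf7a}), so that the common-denominator representatives obey the defining inequalities.

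Finally I would squeeze, using that $S_{x_i}$ and $S_y$ are upper subsets with infima $\varphi(x_i)$ and $\varphi(y)$ (Claim~\ref{claim:1}). For the upper bound, pick $r_i\in S$ with $r_i>\varphi(x_i)$; then each $r_i\in S_{x_i}$, so $\sum_i r_i\in S_y$ and $\varphi(y)\leqslant\sum_i r_i$, and letting $r_i$ decrease to $\varphi(x_i)$ (density) gives $\varphi(y)\leqslant\sum_i\varphi(x_i)$. For the lower bound, pick $r_i\in S$ with $r_i<\varphi(x_i)$, so $r_i\notin S_{x_i}$; the inequality is then \emph{strict}, $g(c^{p_i})<g(x_i^kc^{q_i})$, since equality would force $\varphi(x_i)=r_i$ by Claim~\ref{claim:2}; the reverse comparison yields $\sum_i r_i\notin S_y$, hence $\sum_i r_i\leqslant\varphi(y)$, and letting $r_i\uparrow\varphi(x_i)$ gives $\sum_i\varphi(x_i)\leqslant\varphi(y)$. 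The main obstacle I anticipate is the bookkeeping in the regrouping identity, namely keeping every intermediate string inside $I^{(n)}$ while moving between the ``$n$ equal blocks'' picture needed for Proposition~\ref{prop:as8df65p}$(iv)$ and the ``interleaved $(x_1\cdots x_n)^k$'' picture needed to recover $y^k$, together with the care required to pass to a common denominator without leaving $A_n$ and to secure the strict inequality in the lower-bound step.
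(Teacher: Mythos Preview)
Your proposal is correct and follows essentially the same route as the paper: combine the defining inequalities of the $S_{x_i}$ via Proposition~\ref{prop:as8df65p}$(iv)$, symmetry, and strict monotonicity to obtain the corresponding inequality for $S_{g(x_1\cdots x_n)}$, then squeeze using the density of $S$. The only cosmetic difference is that the paper first aligns all $r_i$ to a common $q$ (implicitly via Claim~\ref{claim:7sd65}) rather than carrying distinct $q_i$'s, which lets it skip your explicit regrouping identity and write directly $g(g(x_1^kc^q)\cdots g(x_n^kc^q))=g(g(x_1\cdots x_n)^kc^{nq})$.
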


\begin{proof}
Let $r_i=(p_i-q)/k >\varphi(x_i)$ for all $i\in [n]$. Then $g(c^{p_i})>g(x_i^kc^q)$, and by Proposition~\ref{prop:as8df65p}$(iv)$, Claim~\ref{claim:sdf3458ds}, and symmetry, we have
\[
g(c^{\sum_{i=1}^n{p_i}})=g(g(c^{p_1})\cdots g(c^{p_n}))>g(g(x_1^kc^q)\cdots g(x_n^kc^q))= g(g(x_1\cdots x_n)^kc^{nq}).
\]
By Claim~\ref{claim:2}, $(\sum_{i=1}^n{p_i}-nq)/k\in S_{g(x_1\cdots x_n)}$. Thus $\sum_{i=1}^n r_i>\varphi(g(x_1\cdots x_n))$. Similarly, if
$r_i\leqslant\varphi(x_i)$ for all $i\in [n]$, then $\sum_{i=1}^n r_i\leqslant\varphi(g(x_1\cdots x_n))$. The result then follows from
Claim~\ref{claim:e6w7r}.
\end{proof}

\begin{claim}
$\varphi$ is nondecreasing.
\end{claim}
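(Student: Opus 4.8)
The plan is to show that $\varphi(x)=\inf S_x$ is monotone by exploiting the definitions of $S_x$ and the strict monotonicity of $g$ established in Claim~\ref{claim:sdf3458ds}. I would take two elements $x,y\in I$ with $x<y$ and aim to prove $\varphi(x)\leqslant\varphi(y)$, which amounts to showing $S_y\subseteq S_x$ (recall that each $S_x$ is an upper set whose infimum is $\varphi(x)$, so the smaller set has the larger infimum). The natural route is to take an arbitrary $r\in S_y$, write it in some admissible form $r=(p-q)/k$ with $k,p,q+1\in A_n$, and recall that by the characterization of $S_y$ established just before Claim~\ref{claim:e6w7r}, this representation must satisfy $g(c^p)>g(y^kc^q)$.

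From here the key step is a single application of strict monotonicity. Since $x<y$ and $g_n=f$ is strictly increasing in each variable (Claim~\ref{claim:sdf3458ds}), repeated substitution gives $g(x^kc^q)\leqslant g(y^kc^q)$ (in fact strictly, but the weak inequality suffices); formally one replaces the copies of $y$ by $x$ one at a time, each replacement not increasing the value of $g$. Combining this with $g(c^p)>g(y^kc^q)$ yields $g(c^p)>g(x^kc^q)$, which is exactly the condition placing $r$ in $S_x$. Hence $S_y\subseteq S_x$, and taking infima gives $\varphi(x)=\inf S_x\leqslant\inf S_y=\varphi(y)$, as required.

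I expect the main technical obstacle to be purely bookkeeping rather than conceptual: one must make sure that the inequality $g(x^kc^q)\leqslant g(y^kc^q)$ is justified by the strict monotonicity of $g$ in \emph{each} of the $k$ arguments carrying a factor of $x$ or $y$, using Proposition~\ref{prop:as8df65p}$(iv)$ together with symmetry to group the arguments as $g(g(x)^kg(c)^q)$ in the manner already used in Claim~\ref{claim:9asf7a} and Claim~\ref{claim:4}. Once that monotonicity of $g$ on powers is recorded, the inclusion $S_y\subseteq S_x$ and the conclusion $\varphi(x)\leqslant\varphi(y)$ follow immediately. No case distinction on the endpoints of $I$ is needed here, so the argument is short.
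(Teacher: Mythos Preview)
Your proposal is correct and follows essentially the same route as the paper's proof: show $S_y\subseteq S_x$ whenever $x<y$ by taking $r=(p-q)/k\in S_y$, using strict monotonicity (Claim~\ref{claim:sdf3458ds}) to obtain $g(c^p)>g(y^kc^q)>g(x^kc^q)$, and then passing to infima. The paper records this in a single line, but the argument is identical.
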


\begin{proof}
Suppose $y>x$ and $(p-q)/k\in S_y$. Then $g(c^p)>g(y^kc^q)>g(x^kc^q)$ and hence $S_y\subseteq S_x$ and so $\varphi(y)=\inf S_y \geqslant\inf
S_x=\varphi(x)$.
\end{proof}

\begin{claim}\label{claim:3}
$\varphi$ is continuous.
\end{claim}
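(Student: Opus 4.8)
The plan is to establish continuity of $\varphi$ by exploiting the monotonicity already proved together with the Cauchy-type additivity established in Claim~\ref{claim:4} and the density facts from Claims~\ref{claim:e6w7r} and~\ref{claim:2}. Since $\varphi$ is nondecreasing, its only possible discontinuities are jump discontinuities, so it suffices to rule these out. The key leverage is the functional equation $\varphi(g(x_1\cdots x_n))=\sum_{i=1}^n\varphi(x_i)$: the range of $\varphi$ is closed under the operation $(t_1,\ldots,t_n)\mapsto\sum_i t_i$ via $g$, which forces the range to be rather ``full'' and thereby prevents gaps.

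First I would record that $\varphi(c)=1$ (Claim~\ref{claim:2}) and that, by Claim~\ref{claim:4} applied with repeated arguments, $\varphi(g(c^p)/\text{-type values})$ realizes every element of $S$; more precisely, whenever $g(c^p)=g(x^kc^q)$ we get $\varphi(x)=(p-q)/k$, so every rational in $S$ lies in the range of $\varphi$. Since $S$ is dense in $\R$ by Claim~\ref{claim:e6w7r}, the range of $\varphi$ is a dense subset of some interval. The strategy is then to show that the range is actually an interval (no gaps): a jump of $\varphi$ at a point would mean some open interval of values is omitted, contradicting density of the attained values. Concretely, suppose $\varphi$ had a jump at $x_0$, say $\lim_{x\uparrow x_0}\varphi(x)=\alpha<\beta=\lim_{x\downarrow x_0}\varphi(x)$ (with $\varphi(x_0)$ somewhere in $[\alpha,\beta]$). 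I would pick a value $s\in S$ strictly between $\alpha$ and $\beta$ but different from $\varphi(x_0)$; by the characterization of the range via Claim~\ref{claim:2}, $s=\varphi(y)$ for some $y\in I$, and then monotonicity forces $y$ to straddle $x_0$ in an impossible way (either $y<x_0$ giving $\varphi(y)\le\alpha$, or $y>x_0$ giving $\varphi(y)\ge\beta$, or $y=x_0$ giving $s=\varphi(x_0)$), a contradiction in every case.

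The main obstacle I anticipate is the bookkeeping needed to guarantee that the intermediate value $s\in S$ can genuinely be \emph{attained} by $\varphi$, i.e.\ that there is an honest $y\in I$ with $\varphi(y)=s$ rather than merely $s$ being an infimum. Claim~\ref{claim:2} gives attainment precisely when an equality $g(c^p)=g(x^kc^q)$ holds, so the delicate point is to produce, for the chosen $s=(p-q)/k$, an element $y$ solving $g(y^kc^q)=g(c^p)$. Here I would use continuity and cancellativity of $g$ (inherited from $f$) in the spirit of Claim~\ref{lem:2}: as $y$ ranges over $I$ the quantity $g(y^kc^q)$ varies continuously and, by the divergence of the iterated sequences to $\sup I$ established in Claim~\ref{lem:2}, sweeps across $g(c^p)$, so an intermediate-value argument yields the required $y$. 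Once attainment is secured, the monotonicity contradiction closes the argument and establishes continuity of $\varphi$.
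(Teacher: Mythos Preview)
Your overall strategy---reduce to jump discontinuities by monotonicity, pick an $s\in S$ in the gap, and show that $s$ is attained by $\varphi$ via an intermediate-value argument on $y\mapsto g(y^kc^q)$---is exactly the paper's approach. However, your execution of the attainment step contains a real error. The assertion ``every rational in $S$ lies in the range of $\varphi$'' is false in general: the range $J=\varphi(I)$ may well be, e.g., $[a,\infty)$ with $a\geqslant 0$, so negative members of $S$ are never attained. Correspondingly, your proposed IVT argument cannot work for arbitrary $s=(p-q)/k$: Claim~\ref{lem:2} tells you that iterating $c$'s pushes values to $\sup I$, but it says nothing about a \emph{lower} bound for $g(y^kc^q)$ as $y$ varies over $I$, and indeed for large $q$ this quantity can be bounded below well above $g(c^p)$. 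So ``sweeps across $g(c^p)$'' is unjustified as a global statement.

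The fix---and this is what the paper does---is to localize and use the jump itself to supply both IVT bounds. Choose $x<y$ in $I$ with $\varphi(x)<r<\varphi(y)$ for your chosen $r=(p-q)/k\in S$. Now invoke the \emph{definition} of $\varphi$ via $S_x$: since $r>\varphi(x)=\inf S_x$ and $S_x$ is an upper set (Claim~\ref{claim:1}), we have $r\in S_x$, i.e.\ $g(c^p)>g(x^kc^q)$; since $r<\varphi(y)=\inf S_y$, we have $r\notin S_y$, i.e.\ $g(c^p)\leqslant g(y^kc^q)$. Continuity of $t\mapsto g(t^kc^q)$ on $[x,y]$ then yields $t$ with $g(t^kc^q)=g(c^p)$, and Claim~\ref{claim:2} gives $\varphi(t)=r$, the desired contradiction. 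You do not need Claim~\ref{lem:2} here at all; the IVT bounds come directly from the order-theoretic description of $S_x$, not from any divergence statement.
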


\begin{proof}
Since $\varphi$ is nondecreasing, the only possible sort of discontinuity is a gap discontinuity. Hence, if $\varphi$ is discontinuous, there must
exist $x,y\in I$, say $x<y$, and an interval, and thus a rational $r\notin\varphi(I)$, such that $\varphi(x)<r<\varphi(y)$. Now if $r=(p-q)/k$,
then $g(x^kc^q)<g(c^p)\leqslant g(y^kc^q)$. By continuity of $g_{k+q}$, there is $t\in {]x,y]}$ such that $g(c^p)=g(t^kc^q)$. By Claim~\ref{claim:2}
it then follows that $\varphi(t)=r$, which yields the desired contradiction.
\end{proof}

\begin{claim}\label{claim:5}
$\varphi$ is strictly increasing.
\end{claim}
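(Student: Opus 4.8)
The plan is to exploit the additivity from Claim~\ref{claim:4} together with the strict monotonicity of $f$ from Claim~\ref{claim:sdf3458ds} to show that a single ``plateau'' of $\varphi$ would force uncountably many plateaus. Since $\varphi$ is continuous (Claim~\ref{claim:3}) and nondecreasing, it is strictly increasing if and only if none of its level sets $\varphi^{-1}(w)$ contains a nondegenerate interval (the level sets of a nondecreasing function are automatically intervals). So I would argue by contradiction, assuming that $\varphi$ is constant, say equal to $v$, on some nondegenerate interval $[a,b]\subseteq I$ with $a<b$.

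First I would record that $\varphi$ is nonconstant, so that $J:=\varphi(I)$ is a nondegenerate interval: indeed $\varphi(c)=1$ by Claim~\ref{claim:2}, while $\varphi(f(c^n))=n\,\varphi(c)=n$ by Claim~\ref{claim:4}, and $n\geqslant 2$. Next, for each fixed $y\in I$ I would consider the map $x\mapsto f(x\,y\,c^{n-2})$. By Claim~\ref{claim:sdf3458ds} this map is continuous and strictly increasing, hence a homeomorphism onto its image; in particular it carries $[a,b]$ onto the nondegenerate interval $\big[f(a\,y\,c^{n-2}),f(b\,y\,c^{n-2})\big]$. By Claim~\ref{claim:4}, $\varphi$ takes the constant value $v+\varphi(y)+(n-2)$ on this image, so $\varphi^{-1}\big(v+\varphi(y)+(n-2)\big)$ contains a nondegenerate interval.

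The contradiction then comes from letting $y$ range over $I$: the quantity $\varphi(y)$ sweeps out the whole interval $J$, so the values $w=v+\varphi(y)+(n-2)$ fill a nondegenerate interval $W$ of reals. For each $w\in W$ the level set $\varphi^{-1}(w)$ contains a nondegenerate interval, and distinct values of $w$ give disjoint level sets. This would produce uncountably many pairwise disjoint nondegenerate subintervals of $I$, which is impossible since each of them contains a distinct rational number. Hence $\varphi$ has no nondegenerate level set and is strictly increasing.

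I expect the only real subtlety to be the reduction at the start---recognizing that ``not strictly increasing'' for a continuous nondecreasing function means exactly the existence of one nondegenerate plateau---after which the additive identity of Claim~\ref{claim:4} does all the work by translating that one plateau, via the free parameter $y$, into a continuum of plateaus. The remaining verifications (that each image interval is genuinely nondegenerate and that $J$ is nondegenerate) are routine.
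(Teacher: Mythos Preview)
Your argument is correct and takes a genuinely different route from the paper's. The paper instead considers the \emph{maximal} interval $I'$ on which $\varphi$ equals some constant $a$, sets $t=\sup I'$, and splits into two cases. If $t\notin I$ (so the plateau reaches the right end of $I$), then $\varphi(z)=a$ for every $z>x$; but $g(x\,c^{n-1})>x$ by Claim~\ref{lem:2}, and Claim~\ref{claim:4} then forces $a=a+(n-1)$, a contradiction. If $t\in I$, continuity gives $\varphi(t)=a$, and the paper uses an intermediate-value step for $g_n$ to locate a point $z$ with $t<z<g(c^q)$ and $g(x\,z^{n-1})=g(t^{n})$; Claim~\ref{claim:4} then yields $\varphi(z)=a$, contradicting the maximality of $I'$.

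Your approach sidesteps the case split and the intermediate-value construction entirely: by sliding the free parameter $y$ in $x\mapsto f(x\,y\,c^{n-2})$ and using additivity, you translate the single plateau into a continuum of disjoint plateaus and finish with the standard cardinality obstruction. This is slicker and uses less of the ambient structure---you never touch the topology of the right endpoint of $I$ or the growth estimate from Claim~\ref{lem:2}, only Claims~\ref{claim:sdf3458ds}, \ref{claim:2}, \ref{claim:3}, and~\ref{claim:4}. The paper's argument, by contrast, is more constructive: it identifies a specific point $z$ just past the maximal plateau where the contradiction materializes, rather than appealing to a countability property of $\R$.
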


\begin{proof}
For the sake of contradiction, suppose that there are $x,y\in I$ such that $x<y$ and $\varphi(x)=\varphi(y)=a$. Since $\varphi$ is
nondecreasing, there is an interval $I'$ containing $x$ and $y$, and such that $\varphi(z)=a$, for all $z\in I'$. Let $I'$ be the largest interval
having this property, and set $t=\sup I'$. If $t\notin I$, then for every $z>x$, $\varphi(z)=a$. Now $g(x\, c^{n-1})>x$ (by Claim~\ref{lem:2})
and hence $a=\varphi(g(x\, c^{n-1}))=a+(n-1)>a$ (by Claim~\ref{claim:4}), a contradiction. Thus $t\in I$, and $\varphi(t)=a$ by
Claim~\ref{claim:3}. We have $g(x\, t^{n-1})<g(t^{n})$ and, by Claim~\ref{lem:2}, there exists $q$ such that $q+1\in A_n$ and $g(t^{n})<g(x\,
c^{q(n-1)})=g(x\, g(c^q)^{n-1})$ and $g(c^q)>t$. By continuity of $g_n$, there is $z\in I$ such that $t<z<g(c^q)$ (and so $z\notin I'$) and
$g(x\, z^{n-1})=g(t^{n})$. Thus
\[
a+(n-1)\,\varphi(z)=\varphi(x)+(n-1)\,\varphi(z)=\varphi(g(x\, z^{n-1}))=\varphi(g(t^{n}))=n\, \varphi(t)=n\, a\, ,
\]
and we obtain $\varphi(z)=a$, so $z\in I'$, a contradiction.
\end{proof}

Thus $\varphi$ is a continuous strictly increasing $n$-ary semigroup homomorphism and, by Claim~\ref{claim:4}, its range $J$ is a connected real additive $n$-ary semigroup. Hence the only possibilities for $J$ are $]{-\infty},b[$,
$]{-\infty},b]$, $]a,\infty[$, $[a,\infty[$ or ${]{-\infty},\infty[}$ $(b\leqslant 0\leqslant a)$; see final comments in \cite{CraPal89}. This completes the proof of the Main Theorem.\qed

\begin{remark}
The function $\varphi$ is determined up to a multiplicative constant, that is, with $\varphi$ all functions $\psi = r\,\varphi$ ($r\neq 0$) belong
to the same function $f$, and only these; see the ``Uniqueness'' section in \cite{Acz04}.
\end{remark}

\begin{remark}
An $n$-ary semigroup $(I,f)$ is said to be \emph{reducible to} (or \emph{derived from}) a binary semigroup $(I,\diamond)$ if there is an
associative extension $g\colon I^*\to I$ of $\diamond$ such that $g_n=f$; that is, $f(x_1\cdots x_n)=x_1\diamond\cdots\diamond x_n$ (see
\cite{Dor28,Pos40}). Dudek and Mukhin \cite{DudMuk06} showed that an $n$-ary semigroup is reducible if and only if we can adjoint an $n$-ary neutral element to it. This shows that the $n$-ary semigroup given in
Example~\ref{ex:asd6} is not reducible since we cannot adjoint any $n$-ary neutral element (for an alternative proof, see \cite{MarMat11}). However, the Main Theorem shows that every Acz\'elian $n$-ary semigroup is reducible and hence we can always adjoint an $n$-ary neutral element to it (if $0\in J$, then the neutral element is $e=\varphi^{-1}(0)$; otherwise fix $e\notin I$ and extend $\varphi$ to $\varphi'\colon I\cup\{e\}\to J\cup\{0\}$ by the rule $\varphi'(x)=\varphi(x)$ if $x\in I$ and $\varphi'(e)=0$).
\end{remark}

\section*{Acknowledgments}

The authors wish to thank Judita Dasc\u{a}l, Pierre Mathonet, and Michel Rigo for helpful comments and suggestions. We would also like to thank the referee for bringing to our attention reference \cite{DudMuk06}, which provides a necessary and sufficient condition for an $n$-ary semigroup to be reducible. This research is supported by the internal
research project F1R-MTH-PUL-09MRDO of the University of Luxembourg.

\end{document}